\newtheorem{lemma}{Lemma}
\newtheorem{theorem}{Theorem}
\newtheorem{cor}{Corollary}
\newcommand{\F}{\mathbb{F}}
\newcommand\blfootnote[1]{%
  \begingroup
  \renewcommand\thefootnote{}\footnote{#1}%
  \addtocounter{footnote}{-1}%
  \endgroup
}
\newcommand{\beq}[1]{\begin{equation}\label{#1}}
\newcommand{\eeq}{\end{equation}}
\title[Elementary methods for Incidence problems in finite fields]{Elementary methods for Incidence problems in finite fields}
\author[J. Cilleruelo, A. Iosevich, B. Lund, O. Roche-Newton and M. Rudnev]{Javier Cilleruelo, Alex Iosevich, Ben Lund, Oliver Roche-Newton and Misha Rudnev}
\address{J. Cilleruelo: Instituto de Ciencias Matem\'aticas
 (CSIC-UAM-UC3M-UCM) and Departamento de Matem\'aticas, Universidad
 Aut\'onoma de Madrid, 28049 Madrid, Spain }
\email{franciscojavier.cilleruelo@uam.es}
\address{A. Iosevich: Department of Mathematics, University of Rochester, Rochester, NY }
\email{iosevich@gmail.com }
\address{B. Lund: Department of Computer Science, Rutgers, The State University of New Jersey, NJ }
\email{lund.ben@gmail.coms}
\address{O. Roche-Newton: Johann Radon Institute for Computational and Applied Mathematics (RICAM), Austrian Academy of Sciences, 4040 Linz, Austria }
\email{o.rochenewton@gmail.com }
\address{M. Rudnev: School of Mathematics, University Walk, Bristol, BS8 1TW }
\email{ M.Rudnev@bristol.ac.uk}
\begin{document}

\begin{abstract}
We use elementary methods to prove an incidence theorem for points and spheres in $\F_q^n$. As an application, we show that any point set $P\subset \F_q^2$ with $|P|\geq 5q$ determines a positive proportion of all circles. The latter result is an analogue of Beck's Theorem for circles which is optimal up to multiplicative constants.
\end{abstract}

\maketitle

\section{Introduction}
\blfootnote{Mathematics Subject Classification 52C10}
Let $\F_q$ be a field with characteristic strictly greater than 2.\footnote{Whenever a finite field $\F_q$ is mentioned in this paper, it is assumed to have characteristic strictly greater than 2.} In this note, it is established that for a point set $P \subset \mathbb{F}_q^d$ and a family $\mathcal S$ of \textit{spheres} in $\mathbb F_q^d$, the number of incidences between the points and spheres, which is denoted by $I(P,\mathcal S):=|\{(p,S)\in{ P \times \mathcal S}:p \in S\}|$, satisfies the bound

$$\frac{|P||\mathcal S|}{q} - |P|^{1/2}|\mathcal S|^{1/2}q^{d/2}<I(P,\mathcal S)< \frac{|P||\mathcal S|}{q} + |P|^{1/2}|\mathcal S|^{1/2}q^{d/2}.$$

Many results on incidence problems in finite fields have appeared in recent years; see for example \cite{BKT,HR,Jones, vinh}. For relatively large sets of points and surfaces  in $\F_q^d$, Fourier analysis and spectral graph theory have been the main tools to deal with these problems. For example, Vinh \cite{vinh} used the spectral method to prove that, for sets $P$ and $\mathcal L$ of points and lines respectively in $\mathbb{F}_q^2$,
\begin{equation}
I(P, \mathcal L)\leq \frac{|P||\mathcal L|}{q}+(|P||\mathcal L|q)^{1/2}.
\label{vinhST}
\end{equation}
The result was extended to incidences between points and hyperplanes in $\F_q^d$, and can also be proven using discrete Fourier analysis.

In \cite{Ci}, the first author found an elementary method to prove some results on combinatorial problems in finite fields, including an alternative proof of \eqref{vinhST}. Here we follow that elementary approach to give an estimate on incidences of points and spheres in $\F_q^d$ and illustrate it with some applications to the pinned distance problem, as well proving a version of Beck's Theorem for circles in $\mathbb{F}_q^2$.

In particular, the version of Beck's Theorem for points and circles is tight up to multiplicative constants. In the forthcoming Theorem \ref{beckcirc}, it is established that any set $P \subset \F_q^2$ such that $|P| \geq 5q$ determines\footnote{Just as in the real plane, three noncollinear points in $\F_q^2$ determine a unique circle. A set of points $P$ determines a given circles if three points from $P$ determine that circle. More details of these basic properties of circles are given in section 3.} a positive proportion of all circles. On the other hand, if one takes a set $P$ of $q$ points lying on a single line in the plane, then $P$ does not determine any circles. Similarly, if $P$ consists of, say, $q+1$ points on the same circle, then $P$ determines only $1$ circle. These degenerate examples are in a sense $1$-dimensional, and illustrate the tightness of Theorem \ref{beckcirc}.

Before saying any more about spheres in finite fields, it is necessary to define what is meant by such an object. We follow the notion of distance introduced in \cite{IR}; given a pair of points $x=(x_1,\cdots,x_d)$ and $y=(y_1,\cdots,y_d)$ in $\mathbb{F}_q^d$, the \textit{distance} between $x$ and $y$ is given by
$$||x-y||:=(x_1-y_1)^2+\cdots+(x_d-y_d)^2.$$
As one might expect, a \textit{sphere} in $\mathbb{F}_q^d$ is a set of points which are the same distance $\lambda$ from a given central point $(\alpha_1,\cdots,\alpha_d)$. That is, a sphere is the set of points $x=(x_1,\cdots,x_d)$ which satisfy an equation of the form
$$(x_1-\alpha_1)^2+\cdots+(x_d-\alpha_d)^2=\lambda.$$

These spheres have many natural properties which are analogous with spheres in $\mathbb{R}^d$. For example, given two circles\footnote{Naturally, we call a sphere in $\mathbb{F}_q^2$ a \textit{circle}.} in $\mathbb{F}_q^2$, it is easy to check that the circles intersect in at most two points. 

\subsection{Work of Phoung, Thang and Vinh}Shortly after an earlier draft of this paper was made available online, we became aware of its overlap with a forthcoming paper of Phoung, Thang and Vinh \cite{PTV}. The authors in \cite{PTV} give an independent proof of Theorem \ref{main} via graph theoretic methods similar to those used in \cite{vinh}. Further applications of the incidence result are also given in \cite{PTV}.

\section{Incidences between spheres and points}

Given finite sets $A,B$ in a finite group $(G,+)$ we use the notation \begin{equation*}r_{A+B}(x)=|\{(a,b)\in A\times B:\ a+b=x\}|.\end{equation*}
We recall the following elementary and well known identities:
\begin{eqnarray}
\label{1}\sum_{x\in G}r_{A+B}(x)&=&|A||B|,\\
\label{2}\sum_{x\in G}r^2_{A+B}(x)&=&\sum_{x\in G}r_{A-A}(x)r_{B-B}(x).
\end{eqnarray}
The quantity in \eqref{2} is called the \textit{additive energy} of $A$ and $B$.

\begin{lemma}Define
\begin{equation}A:=\{(a_1,a_2,\cdots,a_d,a_1^2+\cdots+a_d^2):\ a_1,\cdots,a_d\in \F_q\}\subset{\mathbb{F}_q^{d+1}}.
\label{Adefn}
\end{equation}
Then, for all  $x=(x_1,\cdots,x_{d+1})\ne (0,\cdots,0)$,
\begin{equation}\label{r}
r_{A-A}(x)\le q^{d-1}.
\end{equation}

\end{lemma}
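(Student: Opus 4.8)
The plan is to parametrise the representations of $x$ as a difference directly from the definition of $A$. A generic element of $A$ has the form $(b,\|b\|)$, where I abbreviate $\|b\|:=b_1^2+\cdots+b_d^2$ for $b=(b_1,\dots,b_d)\in\F_q^d$. Splitting the target as $x=(\bar x,x_{d+1})$ with $\bar x=(x_1,\dots,x_d)$, a pair $\bigl((a,\|a\|),(b,\|b\|)\bigr)$ contributes to $r_{A-A}(x)$ precisely when $a-b=\bar x$ in the first $d$ coordinates and $\|a\|-\|b\|=x_{d+1}$ in the last. The first condition forces $a=b+\bar x$, so $r_{A-A}(x)$ equals the number of $b\in\F_q^d$ satisfying the single scalar equation $\|b+\bar x\|-\|b\|=x_{d+1}$.

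The key step is to expand the left-hand side. By bilinearity of the form, $\|b+\bar x\|=\|b\|+2\sum_{i=1}^d b_i x_i+\|\bar x\|$, so the quadratic terms in $b$ cancel and the equation collapses to the affine-linear condition $2\sum_{i=1}^d b_i x_i = x_{d+1}-\|\bar x\|$. Thus $r_{A-A}(x)$ is exactly the number of solutions $b$ of one linear equation in $d$ unknowns, and it only remains to count these.

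I would then split into cases according to $\bar x$. If $\bar x\ne 0$, then since the characteristic exceeds $2$ the coefficient vector $(2x_1,\dots,2x_d)$ is nonzero, the linear form is nontrivial, and its level set has exactly $q^{d-1}$ elements; hence $r_{A-A}(x)=q^{d-1}$. If $\bar x=0$, the hypothesis $x\ne 0$ forces $x_{d+1}\ne 0$, while the equation reduces to $0=x_{d+1}$, which is inconsistent, so $r_{A-A}(x)=0$. In either case $r_{A-A}(x)\le q^{d-1}$, as required.

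The only genuine subtlety, and the place where both hypotheses are used, is the degenerate case $\bar x=0$: one must notice that $x\ne 0$ then pins down $x_{d+1}\ne 0$ and so kills the equation, and that $\mathrm{char}(\F_q)>2$ is exactly what prevents the linear form from degenerating when $\bar x\ne 0$. Everything else is the elementary count of solutions to a single linear equation, so I do not anticipate any serious obstacle beyond bookkeeping the cancellation of the quadratic terms correctly.
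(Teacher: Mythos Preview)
Your argument is correct and is essentially the paper's own proof: both reduce $r_{A-A}(x)$ to counting solutions of a single affine-linear equation obtained by substituting one variable and cancelling the quadratic terms, then handle the two cases $\bar x\neq 0$ and $\bar x=0$ in exactly the same way. The only cosmetic difference is that you eliminate $a$ and count over $b$, whereas the paper eliminates $b$ and counts over $a$.
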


\begin{proof}
This can be calculated directly. Indeed, the quantity $r_{A-A}(x)$ is the number of solutions
$$(a_1,\cdots,a_d,b_1,\cdots,b_d)\in{\mathbb{F}_q^{2d}}$$
to the system of equations
\begin{align*}
a_1-b_1 &= x_1
\\ a_2-b_2 &= x_2.
\\ &\vdots
\\ a_d-b_d &=x_d
\\ a_1^2+\cdots+a_d^2-b_1^2-\cdots-b_d^2 &=x_{d+1}.
\end{align*}
The $b_i$ variables can be eliminated, and this system of equations reduces to
\begin{equation}
2a_1x_1+\cdots+2a_dx_d-x_1^2-\cdots-x_d^2=x_{d+1}.
\label{hyperplane}
\end{equation}

If $x\ne 0$ then  there is some $1\leq{i}\leq{d}$ such that $x_i\neq{0}$. Otherwise $x_{d+1}=0$ and $x=0$ is the only choice which admits solutions to \eqref{hyperplane}. Without loss of generality, we may take $i=1$.
If we fix $a_2,\cdots,a_d$, then since the characteristic of the field is not equal to $2$, we have $2x_1\neq{0}$ and the value of $a_1$ is uniquely determined. This gives $r_{A-A}(x)\le q^{d-1}$.

If $x=0$, then trivially $r_{A-A}(0)=|A|=q^d$.

\end{proof}

\begin{lemma}\label{l2}Let $A$ be as defined in \eqref{Adefn},
and let $B,C\subset \mathbb{F}_q^{d+1}$ be arbitrary. Then
$$|\{(b,c)\in B\times C:\ b-c\in A\}|=\frac{|B||C|}q+\theta|B|^{1/2}|C|^{1/2}q^{d/2},$$
for some $\theta\in{\mathbb{R}}$ such that $|\theta|<1$.
\end{lemma}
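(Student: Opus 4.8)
The plan is to collapse the count to a single sum over $B$ and then apply the Cauchy--Schwarz inequality, using the previous lemma to control the resulting second moment. First I would rewrite the quantity to be estimated: for a fixed $b$, the number of $c\in C$ with $b-c\in A$ equals $r_{A+C}(b)$, since $a=b-c$ is determined by $c$. Hence, writing $N$ for the quantity to be estimated,
$$N:=|\{(b,c)\in B\times C:\ b-c\in A\}|=\sum_{b\in B}r_{A+C}(b).$$
Because $|A|=q^d$ and $\sum_{x}r_{A+C}(x)=|A||C|$, the ``expected'' contribution of each $b$ is $|A||C|/q^{d+1}=|C|/q$, which produces the main term $|B||C|/q$. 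So I would set $h(b):=r_{A+C}(b)-|A||C|/q^{d+1}$ and record that $N-|B||C|/q=\sum_{b\in B}h(b)$.

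Next, by Cauchy--Schwarz and then enlarging the range of summation to all of $\F_q^{d+1}$,
$$\Big(N-\frac{|B||C|}{q}\Big)^2\le |B|\sum_{b\in B}h(b)^2\le |B|\sum_{x\in\F_q^{d+1}}h(x)^2.$$
Since $\sum_x r_{A+C}(x)=|A||C|$, the second moment simplifies to $\sum_x h(x)^2=\sum_x r_{A+C}^2(x)-(|A||C|)^2/q^{d+1}$, and the identity \eqref{2} converts the first term into $\sum_x r_{A-A}(x)r_{C-C}(x)$. This is exactly where the previous lemma enters: isolating $x=0$ gives $r_{A-A}(0)r_{C-C}(0)=q^d|C|$, while for $x\neq0$ the bound \eqref{r} yields $\sum_{x\neq0}r_{A-A}(x)r_{C-C}(x)\le q^{d-1}(|C|^2-|C|)$.

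Finally I would collect terms. The quadratic-in-$|C|$ contribution $q^{d-1}|C|^2$ cancels exactly against $(|A||C|)^2/q^{d+1}=q^{d-1}|C|^2$, leaving $\sum_x h(x)^2\le q^{d-1}|C|(q-1)<q^d|C|$. Substituting this back gives $(N-|B||C|/q)^2<|B|\,q^d|C|$, which is precisely the claimed estimate with $|\theta|<1$; the degenerate cases $|B|=0$ or $|C|=0$ are trivial. The step I expect to be most delicate is tracking this cancellation carefully enough to obtain the \emph{strict} inequality $|\theta|<1$ rather than $|\theta|\le 1$, since the whole margin comes from the single factor $(q-1)$ in place of $q$.
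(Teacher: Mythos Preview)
Your proposal is correct and follows essentially the same route as the paper: rewrite the count as $\sum_{b\in B}r_{A+C}(b)$, subtract the expected value $|C|/q$, apply Cauchy--Schwarz and extend the sum to all of $\F_q^{d+1}$, then use identity \eqref{2} together with the bound \eqref{r} on $r_{A-A}(x)$ to obtain $\sum_x h(x)^2\le |C|q^{d-1}(q-1)$. The strict inequality $|\theta|<1$ indeed comes precisely from the factor $(q-1)$ in place of $q$, exactly as you anticipated.
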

\begin{proof}
Note that \begin{eqnarray*}
|\{(b,c)\in B\times C:\ b-c\in A\}|-\frac{|B||C|}q&=&\sum_{b\in B}\left (|\{c\in C:\ b-c\in A\}|-\frac{|C|}q\right )\\
&=&\sum_{b\in B}\left (r_{A+C}(b)-\frac{|C|}q\right ):=E.
\end{eqnarray*}
By Cauchy-Schwarz:
\begin{eqnarray*}|E|^2\le |B|\sum_{b\in B}\left (r_{A+C}(b)-\frac{|C|}q\right )^2
\le |B|\sum_{x\in \F_q^{d+1}}\left(r_{A+C}(x)-\frac{|C|}q\right)^2.
\end{eqnarray*}
Using \eqref{1}, \eqref{2} and the fact that $|A|=q^d$ we have
\begin{eqnarray*}\sum_{x\in \F_q^{d+1}}\left(r_{A+C}(x)-\frac{|C|}q\right)^2&=& \sum_{x\in \F_q^{d+1}}r_{A+C}^2(x)-q^{d-1}|C|^2\\
&=& \sum_{x\in \F_q^{d+1}}r_{A-A}(x)r_{C-C}(x)-q^{d-1}|C|^2\\
&\le & |A||C|+q^{d-1}\sum_{x\neq{0}}r_{C-C}(x)- q^{d-1}|C|^2\\
&=& |A||C|+q^{d-1}(|C|^2-|C|)- q^{d-1}|C|^2\\
&=& |C|q^{d-1}(q-1).\end{eqnarray*}
Thus, $|E|<(|B||C|)^{1/2} q^{d/2}$, which completes the proof.
\end{proof}

\begin{theorem} \label{main}Let $P\subset \F_q^d$ and let $\mathcal S$ be a family of spheres in $\F_q^d$. Then
$$\frac{|P||\mathcal S|}q-|P|^{1/2}|\mathcal S|^{1/2}q^{d/2}<I(P,\mathcal S)<\frac{|P||\mathcal S|}q+|P|^{1/2}|\mathcal S|^{1/2}q^{d/2}.$$
\end{theorem}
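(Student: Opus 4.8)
The plan is to deduce Theorem \ref{main} directly from Lemma \ref{l2} by encoding the point--sphere incidence relation as a difference condition $b-c\in A$ in the lifted space $\mathbb{F}_q^{d+1}$, where $A$ is the set from \eqref{Adefn}. The key observation is that a point lies on a sphere precisely when a suitable lift of the point minus a suitable lift of the sphere lands in $A$, and the size of $A$ is exactly $q^d$, which is why the lemma's error term will match the target bound.

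Concretely, I would lift each point $p=(p_1,\dots,p_d)\in P$ to the vector $(p_1,\dots,p_d,0)\in\mathbb{F}_q^{d+1}$, and lift each sphere $S\in\mathcal{S}$, say with center $(\alpha_1,\dots,\alpha_d)$ and radius parameter $\lambda$, to $(\alpha_1,\dots,\alpha_d,-\lambda)$. Denote by $B$ and $C$ the resulting subsets of $\mathbb{F}_q^{d+1}$. For such a pair the difference is $(p_1-\alpha_1,\dots,p_d-\alpha_d,\lambda)$, which, by the definition of $A$, lies in $A$ exactly when $\lambda=\sum_{i=1}^d(p_i-\alpha_i)^2$, that is, exactly when $p\in S$. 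Hence $I(P,\mathcal{S})=|\{(b,c)\in B\times C:\ b-c\in A\}|$, which is precisely the quantity estimated in Lemma \ref{l2}.

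Before applying the lemma I would verify that both lifts are injective, so that $|B|=|P|$ and $|C|=|\mathcal{S}|$. Injectivity of the point lift is immediate. For the sphere lift, I would note that since the characteristic is not $2$, the coefficients of a sphere's defining equation recover its center and radius parameter uniquely, so distinct spheres map to distinct points of $C$. With $|B|=|P|$ and $|C|=|\mathcal{S}|$ in hand, Lemma \ref{l2} yields
$$I(P,\mathcal{S})=\frac{|P||\mathcal{S}|}{q}+\theta\,|P|^{1/2}|\mathcal{S}|^{1/2}q^{d/2}$$
for some real $\theta$ with $|\theta|<1$, and the two displayed inequalities of Theorem \ref{main} follow at once.

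The substance of the argument lies entirely in choosing the correct encoding: once the incidence relation has been matched to $b-c\in A$, the theorem is an immediate corollary of Lemma \ref{l2}, so I do not expect a genuine obstacle in the estimation itself. The only points demanding a little care are the bookkeeping that the lifts preserve cardinalities (in particular the injectivity of the sphere lift, which is where the hypothesis on the characteristic is again used) and the harmless convention of identifying each sphere in $\mathcal{S}$ with its center--radius pair.
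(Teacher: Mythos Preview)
Your proposal is correct and matches the paper's proof essentially line for line: the paper defines the same lifts $B=\{(p_1,\dots,p_d,0)\}$ and $C=\{(\alpha_1,\dots,\alpha_d,-\lambda)\}$, verifies $I(P,\mathcal S)=|\{(b,c)\in B\times C:\ b-c\in A\}|$, and invokes Lemma~\ref{l2}. If anything, you are slightly more careful in justifying $|C|=|\mathcal S|$ via injectivity of the sphere lift, which the paper simply asserts.
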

\begin{proof}
We denote by $S_{\alpha_1,\cdots,\alpha_d,\lambda}$ the sphere
$$\{(x_1,\cdots,x_d):\ (x_1-\alpha_1)^2+\cdots+(x_d-\alpha_d)^2=\lambda\}.$$
Define $$B=\{(p_1,\cdots,p_d,0):\ (p_1,\cdots,p_d)\in P\}$$ and $$C=\{(\alpha_1,\cdots,\alpha_d,-\lambda):\ S_{\alpha_1,\cdots,\alpha_d,\lambda}\in \mathcal S\}.$$
Note that $|B|=|P|$ and $|C|=|\mathcal S|$.

Now, note that
\begin{eqnarray*}
& & |\{(b,c)\in B\times C:\ b-c\in A\}|\\&=&|\{((p_1,\cdots,p_d,0),(\alpha_1,\cdots,\alpha_d,-\lambda))\in{B\times{C}}: (p_1-\alpha_1,\cdots,p_d-\alpha_d,\lambda)\in A\}|\\
&=&|\{((p_1,\cdots,p_d,0),(\alpha_1,\cdots,\alpha_d,-\lambda))\in{B\times{C}}:\ (p_1-\alpha_1)^2+\cdots+(p_d-\alpha_d)^2=\lambda)\}|\\
&=&I(P,\mathcal S).
\end{eqnarray*}
An application of Lemma \ref{l2} completes the proof.
\end{proof}

\section{Applications of the incidence bound}

\subsection{Pinned distances}

Let $P$ be a set of points in $\mathbb{F}_q^d$, and $y\in{\mathbb{F}_q^d}$. Following the notation of Chapman et. al. \cite{Ch}, let $\triangle_y(P)$ denote the set of distances between the point $y$ and the set $P$; that is
$$\triangle_y(P):=\{||x-y||:x\in{P}\}.$$
It was established in (\cite{Ch}, Theorem 2.3) that a sufficiently large set of points determines many pinned distances, for many different pins. Here, we use Theorem \ref{main} to give an alternative proof.

%
\begin{cor} \label{pinned} Let $P$ be a subset of $\mathbb{F}_q^d$ such that $|P|\geq{\epsilon^{-1}(1-\epsilon)^{1/2}q^{\frac{d+1}{2}}}$ for some $0<\epsilon<1$. Then,
\begin{equation}
\frac 1{|P|}\sum_{p\in P}|\triangle_p(P)|>(1-\epsilon)q.
\end{equation}

\end{cor}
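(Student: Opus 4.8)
The plan is to reduce the pinned-distance statement to the incidence theorem by an averaging argument. For each pin $p \in P$, the quantity $|\triangle_p(P)|$ counts the number of distinct distances $||x-p||$ realized by points $x \in P$. The natural dual object is the family of spheres: for a fixed pin $p$, consider the spheres centered at the points of $P$, since a point $x$ lies at distance $\lambda$ from $p$ precisely when $p$ lies on the sphere of radius $\lambda$ centered at $x$. So I would set up a family $\mathcal{S}$ of spheres indexed by $(y,\lambda)$ with $y \in P$ and $\lambda$ ranging over the distances actually occurring, and relate incidences to the distance counts.

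Concretely, I would fix attention on the incidence count $I(P,\mathcal{S})$ where $\mathcal{S}$ consists of all spheres centered at a point of $P$ with radius in the realized distance set. The key estimate comes from a Cauchy--Schwarz (or convexity) step: if a pin $p$ realizes few distinct distances, then the distances it does realize must be heavily repeated, which forces the incidence structure to be far from its expected value $|P||\mathcal{S}|/q$. First I would write $\sum_{p}|\triangle_p(P)|$ and bound it below using the fact that the total number of ordered pairs $(x,p)$ is $|P|^2$, distributed among the distance classes; for each $p$ the number of pairs at a fixed distance, summed against the number of distinct distances, is controlled by Cauchy--Schwarz. The deviation bound from Theorem~\ref{main}, namely $|I(P,\mathcal{S}) - |P||\mathcal{S}|/q| < |P|^{1/2}|\mathcal{S}|^{1/2}q^{d/2}$, is what converts a largeness hypothesis on $|P|$ into a lower bound on the average number of distinct distances.

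The mechanism I expect to drive the argument is as follows. For each pin $p$, let $n_p(\lambda)$ denote the number of $x \in P$ with $||x-p|| = \lambda$; then $\sum_\lambda n_p(\lambda) = |P|$ and the number of nonzero terms is exactly $|\triangle_p(P)|$. By Cauchy--Schwarz, $|P|^2 = \left(\sum_\lambda n_p(\lambda)\right)^2 \le |\triangle_p(P)| \sum_\lambda n_p(\lambda)^2$. Summing over $p \in P$ and applying Cauchy--Schwarz once more to split off the $|\triangle_p(P)|$ factor, the sum $\sum_p \sum_\lambda n_p(\lambda)^2$ becomes an incidence-type quantity: it counts collinear-distance configurations, i.e. ordered triples $(x,x',p)$ with $||x-p|| = ||x'-p||$, which in turn is governed by the incidence of $P$ against the spheres centered at the points of $P$. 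I would then plug the upper bound for this triple count, obtained from Theorem~\ref{main}, into the inequality and solve for the average of $|\triangle_p(P)|$, choosing the constant so that the threshold $|P| \ge \epsilon^{-1}(1-\epsilon)^{1/2}q^{(d+1)/2}$ yields the bound $(1-\epsilon)q$.

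The main obstacle will be bookkeeping the double Cauchy--Schwarz so that the error term from Theorem~\ref{main} appears in the correct place and the leading term produces exactly the factor $q$ rather than some constant multiple of it. In particular, one must correctly relate the triple count $\sum_p\sum_\lambda n_p(\lambda)^2$ to an incidence count $I(P,\mathcal{S})$ for a concretely specified $\mathcal{S}$, account for the diagonal contribution (the $x=x'$ or $\lambda=0$ terms), and verify that the chosen size threshold makes the error term $|P|^{1/2}|\mathcal{S}|^{1/2}q^{d/2}$ small enough relative to the main term $|P||\mathcal{S}|/q$ to extract the clean factor $(1-\epsilon)$. Getting the inequality to run in the right direction, so that a lower bound on the average number of distances emerges rather than an upper bound, is the delicate point.
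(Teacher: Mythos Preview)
Your setup of the sphere family $\mathcal S$ (spheres centred at points of $P$ with radii the realised distances) is exactly right, but you then take an unnecessary detour through the distance energy $T=\sum_p\sum_\lambda n_p(\lambda)^2$ and a double Cauchy--Schwarz. The paper's argument is much more direct: with $\mathcal S=\bigcup_{p\in P}\mathcal S_p$, where $\mathcal S_p$ is the set of spheres centred at $p$ passing through some point of $P$, one has $|\mathcal S|=\sum_p|\triangle_p(P)|$ and, crucially, $I(P,\mathcal S)=|P|^2$ \emph{exactly}, since for each fixed $p$ every $x\in P$ lies on precisely one sphere of $\mathcal S_p$. A single application of the upper bound in Theorem~\ref{main} to this equality, followed by the contrapositive assumption $\sum_p|\triangle_p(P)|\le(1-\epsilon)q|P|$, immediately yields $|P|<\epsilon^{-1}(1-\epsilon)^{1/2}q^{(d+1)/2}$. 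No Cauchy--Schwarz is needed at all.

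Your energy route has a genuine gap at the step ``the triple count $T$ is governed by the incidence of $P$ against the spheres centred at points of $P$''. The quantity $T=\sum_{S\in\mathcal S}|P\cap S|^2$ is an $L^2$ incidence statistic, not a linear one, so Theorem~\ref{main} does not bound it directly; writing $T$ as a multiset incidence count $\sum_{(x,p)}n_p(\|x-p\|)$ does not help either, since Theorem~\ref{main} (via Lemma~\ref{l2}) is stated for \emph{sets} of spheres. You would need either a second-moment bound extracted from the proof of Lemma~\ref{l2}, or a separate point--hyperplane incidence theorem applied to perpendicular bisectors. Even once that is done, the Jensen/AM--HM step $\sum_p|\triangle_p(P)|\ge|P|^4/T$ is lossy unless all the $E_p$ are equal, so recovering the precise threshold $\epsilon^{-1}(1-\epsilon)^{1/2}q^{(d+1)/2}$ with the clean factor $(1-\epsilon)$ is unlikely. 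The direct route sidesteps all of this.
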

\begin{proof} Fix a point $p\in{P}$, and construct a family of spheres $\mathcal S_p$ by minimally covering $P$ by concentric spheres around $p$. Note that $|\mathcal S_p|=|\triangle_p(P)|$, and that $I(P,\mathcal S_p)=|P|$. Repeating this process for each point in $P$, we generate a family of spheres $\mathcal S$ defined by the disjoint union
$$\mathcal S:=\bigcup_{p\in{P}}\mathcal S_p.$$
Observe that $I(P,\mathcal S)=\sum_{p\in{P}}I(P,\mathcal S_p)=|P|^2$. On the other hand, Theorem \ref{main} implies that
\begin{align*}|P|^2=I(P,\mathcal S)&<{\frac{|P||\mathcal S|}{q}+|P|^{1/2}|\mathcal S|^{1/2}q^{d/2}}
\\&=\frac{|P|\sum_{p\in{P}}|\triangle_p(P)|}{q}+|P|^{1/2}\left(\sum_{p\in{P}}|\triangle_p(P)|\right)^{1/2}q^{d/2}.
\end{align*}
If $\frac 1{|P|}\sum_{p\in P}|\triangle_p(P)|\le (1-\epsilon)q$ the last inequality would imply that $|P|<{\epsilon^{-1}(1-\epsilon)^{1/2}q^{\frac{d+1}{2}}}$.
\end{proof}

\begin{cor}\label{cor1}
Let $P$ be a subset of $\mathbb{F}_q^d$ such that $|P|\geq{\alpha^{-2}(1-\alpha^2)^{1/2}q^{\frac{d+1}{2}}}$ for some $0<\alpha<1$. Then,
\begin{equation*}
|\triangle_p(P)|> (1-\alpha)q
\end{equation*}
for at least $(1-\alpha)|P|$ points $p\in P$.
\end{cor}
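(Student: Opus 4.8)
The plan is to bootstrap the average lower bound of Corollary~\ref{pinned} into a statement about most individual pins, by a Markov-type counting argument. The only substantive input is Corollary~\ref{pinned} itself, applied with the particular choice $\epsilon=\alpha^2$. With this choice the hypothesis $|P|\ge \alpha^{-2}(1-\alpha^2)^{1/2}q^{(d+1)/2}$ is \emph{exactly} the hypothesis $|P|\ge \epsilon^{-1}(1-\epsilon)^{1/2}q^{(d+1)/2}$ of that corollary, so it applies directly and yields
$$\frac1{|P|}\sum_{p\in P}|\triangle_p(P)|>(1-\alpha^2)q.$$

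First I would record the trivial bound $|\triangle_p(P)|\le q$, valid for every $p$ since distances take at most $q$ values in $\F_q$. Then I would argue by contradiction: suppose that fewer than $(1-\alpha)|P|$ points $p$ satisfy $|\triangle_p(P)|>(1-\alpha)q$, so that the set $\B$ of ``bad'' pins, namely those with $|\triangle_p(P)|\le(1-\alpha)q$, has size $|\B|>\alpha|P|$. Splitting the sum over $P$ into the bad pins and the rest, and bounding each good pin crudely by $q$ and each bad pin by $(1-\alpha)q$, gives
$$\sum_{p\in P}|\triangle_p(P)|\le q\bigl(|P|-|\B|\bigr)+(1-\alpha)q|\B|=q\bigl(|P|-\alpha|\B|\bigr).$$
Inserting $|\B|>\alpha|P|$ then yields $\sum_{p\in P}|\triangle_p(P)|<(1-\alpha^2)q|P|$, which contradicts the averaged bound above. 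Hence at least $(1-\alpha)|P|$ pins are good, as claimed.

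There is essentially no difficult step here; the whole content lies in calibrating the parameter correctly, and I expect that to be the only place demanding care. The choice $\epsilon=\alpha^2$ is what makes the hypotheses of the two corollaries coincide, and simultaneously it is precisely the threshold for which the two-level bound on the sum (value $\le(1-\alpha)q$ on a fraction exceeding $\alpha$, and $\le q$ elsewhere) forces the average below $(1-\alpha^2)q$. A larger value of $\epsilon$ would weaken the averaged conclusion too much, while a smaller one would not be guaranteed by the stated hypothesis on $|P|$; the factorisation $1-\alpha^2=(1-\alpha)(1+\alpha)$ is exactly what balances the ``fraction of pins'' against the ``number of distances per pin'' and makes the argument close.
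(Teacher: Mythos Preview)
Your proof is correct and follows essentially the same approach as the paper: apply Corollary~\ref{pinned} with $\epsilon=\alpha^2$, then run a Markov-type contradiction argument by splitting the sum over good and bad pins. The only cosmetic difference is that you work with the set $\B$ of bad pins while the paper works with its complement $P'$; your handling of the strict versus non-strict inequalities is in fact slightly cleaner.
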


\begin{proof}
Corollary \ref{pinned} implies that
$$\sum_{p\in P}|\triangle_p(P)|>(1-\alpha^2)q|P|.$$ On the other hand let
$$P'=\{p\in{P}:|\triangle_p(P)|\geq{(1-\alpha)q}\}$$ and suppose that $|P'|< (1-\alpha)|P|$. Then we would have
\begin{eqnarray*}\sum_{p\in P}|\triangle_p(P)|&=&\sum_{p\in{P\setminus{P'}}}|\triangle_p(P)|+\sum_{p\in P'}|\triangle_p(P)|\\ &<& (1-\alpha)q(|P|-|P'|)+q|P'|\\ &= & (1-\alpha)q|P|+\alpha q|P'|\\
&< & (1-\alpha^2)q|P|.\end{eqnarray*}
\end{proof}

\subsection{A version of Beck's Theorem for circles}

A result which is closely related to the Szemer\'{e}di-Trotter Theorem and incidence geometry is Beck's Theorem \cite{Beck}. This result states that a set of $N$ points in $\mathbb{R}^2$ determines $\Omega(N^2)$ distinct lines by connecting pairs of points, provided that the set of points does not contain a single line which supports $cN$ points, where $c$ is a small but fixed constant. We say that that $P$ determines a line $l$ if there exist two points $p_1$ and $p_2$ belonging to $P$ which both lie on the line $l$. In finite fields, an analogue of Beck's Theorem was proven by Alon \cite{Alon}, in the form of the following theorem:

\begin{theorem} \label{beck} Let $\epsilon >0$ and let $P$ be a set of points in the projective plane $\mathbb{P}\F_q^2$ such that $|P|>(1+\epsilon)(q+1)$. Then $P$ determines at least
$$\frac{\epsilon^2(1-\epsilon)}{2+2\epsilon}(q+1)^2$$
distinct straight lines.
\end{theorem}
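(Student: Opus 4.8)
The plan is to prove Alon's version of Beck's theorem (Theorem~\ref{beck}) via a double-counting argument that exploits the incidence bound \eqref{vinhST} for points and lines in the projective plane. Since \eqref{vinhST} is stated only for the affine plane, my first step would be to confirm it holds (with the appropriate $q+1$ normalization) in $\mathbb{P}\F_q^2$ by the standard projective duality: points and lines play symmetric roles, every pair of distinct lines meets in exactly one point, and each line carries exactly $q+1$ points. The key quantity to control is the number of \emph{rich} lines: for each line $\ell$ determined by $P$, write $m_\ell = |P \cap \ell|$ for its multiplicity, and observe that the number of determined lines $L$ satisfies $L \geq \sum_\ell 1$, while the total number of collinear point-pairs is $\sum_\ell \binom{m_\ell}{2} = \binom{|P|}{2}$, since every pair of distinct points lies on exactly one line.

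Next I would separate the lines by their richness. Fix a threshold $t$ (to be chosen as a small multiple of $q$, consistent with $|P| > (1+\epsilon)(q+1)$) and split $\sum_\ell \binom{m_\ell}{2}$ into the contribution from \emph{poor} lines with $m_\ell \leq t$ and \emph{rich} lines with $m_\ell > t$. The poor lines contribute at most roughly $\tfrac{t}{2}\sum_\ell m_\ell = \tfrac{t}{2}|P|(q+1)$, because $\sum_\ell m_\ell = |P|(q+1)$ counts incidences (each point lies on $q+1$ lines). For the rich lines I would bound their number and their point-content using \eqref{vinhST}: if $\mathcal{L}_t$ denotes the set of lines with more than $t$ points of $P$, then $I(P,\mathcal{L}_t) > t|\mathcal{L}_t|$, while \eqref{vinhST} gives $I(P,\mathcal{L}_t) \leq \tfrac{|P||\mathcal{L}_t|}{q} + (|P||\mathcal{L}_t|q)^{1/2}$; choosing $t$ comfortably larger than $|P|/q$ forces $|\mathcal{L}_t|$ to be small, on the order of $|P|q/t^2$. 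The main arithmetic then shows that the rich lines cannot account for all of $\binom{|P|}{2}$ pairs, so a definite positive proportion of the collinear pairs must come from poor lines, each of which — having few points — can absorb only $O(t)$ pairs, forcing $L = \Omega(|P|^2/t)$ many distinct lines.

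The delicate part will be tuning the threshold $t$ and tracking the constants so that the final count matches the stated bound $\tfrac{\epsilon^2(1-\epsilon)}{2+2\epsilon}(q+1)^2$ exactly rather than merely up to an unspecified constant. In particular one must use the hypothesis $|P| > (1+\epsilon)(q+1)$ not just to guarantee $P$ is non-degenerate but quantitatively: the excess $\epsilon(q+1)$ of points beyond a single line's capacity is precisely what drives the lower bound, so I would set $t$ proportional to $(q+1)$ with the proportionality constant depending on $\epsilon$, and verify that the subtractions from the rich-line term leave exactly the claimed fraction. I expect the chief obstacle to be this bookkeeping — ensuring that the crude bound on poor lines and the \eqref{vinhST}-driven bound on rich lines combine to isolate the $\epsilon^2(1-\epsilon)/(2+2\epsilon)$ coefficient — rather than any conceptual difficulty, since the overall strategy is a textbook Beck-type argument once the projective incidence estimate is in hand.
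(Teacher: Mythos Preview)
The paper does not actually prove Theorem~\ref{beck}; it is quoted as a result of Alon~\cite{Alon} and left without argument. The only hint of the authors' preferred method is the remark following the proof of Theorem~\ref{beckcirc}: one can adapt that proof, replacing circles by lines and using the \emph{lower} bound coming from a two-sided version of~\eqref{vinhST}, to obtain ``a version of'' Theorem~\ref{beck}.

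That implicit route is genuinely different from yours. You follow the classical real-plane Beck template: count collinear pairs $\binom{|P|}{2}$, split lines at a richness threshold $t$, and use the \emph{upper} incidence bound~\eqref{vinhST} to cap the number of $t$-rich lines. The paper's suggested argument is more direct: let $\mathcal L$ be the set of lines meeting $P$ in at most one point, so $I(P,\mathcal L)\le |\mathcal L|$; combine this with the lower bound $I(P,\mathcal L)>\tfrac{|P||\mathcal L|}{q+1}-(|P||\mathcal L|(q+1))^{1/2}$ and solve for $|\mathcal L|$. The complement within the $q^2+q+1$ projective lines then consists entirely of determined lines. This avoids any threshold optimisation and produces an explicit constant in a single algebraic step, exactly as in the circles case.

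Your strategy is sound and would certainly yield \emph{some} Beck-type lower bound, and you are right that the obstacle is purely the constant bookkeeping. Be aware, though, that neither your rich/poor dichotomy nor the paper's lower-bound shortcut is claimed to recover Alon's exact coefficient $\tfrac{\epsilon^2(1-\epsilon)}{2+2\epsilon}$; the paper itself only promises ``a version'' of the theorem this way, and Alon's original proof proceeds via eigenvalues of the point-line incidence graph. If matching that specific constant is your goal, you may need to follow Alon's spectral argument rather than the incidence-bound route.
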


Iosevich, Rudnev and Zhai \cite{IRZ} used Fourier analytic techniques to establish a similar result.
So, a sufficiently large set of points in the plane determines a positive proportion of all possible lines. The aim here is to establish an analogue of Theorem \ref{beck}, with the role of lines replaced by circles. Since there are $q^2$ choices for the location of a circle's centre, and $q$ choices for the radius, we want to generate $\Omega(q^3)$ circles. We first need an obvious definition of what it means for a circle to be generated by a set of points.

Given three non-collinear points in $\mathbb{R}^2$, there exists a unique circle which passes through each of the three points. The same is true of three points in $\mathbb{F}_q^2$:

\begin{lemma} \label{collinear} Let $(x_1,y_1)$, $(x_2,y_2)$ and $(x_3,y_3)$ be three distinct non-collinear points in $\mathbb{F}_q^2$. Then there exists a unique circle supporting the three points.
\end{lemma}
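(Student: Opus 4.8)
The plan is to reduce the lemma to a single linear-algebra fact, namely that the condition for three points to be collinear is exactly the vanishing of a $3\times 3$ determinant. First I would put a general circle into expanded form. Expanding $(x-\alpha)^2+(y-\beta)^2=\lambda$ shows that every circle is the zero set of an equation of the shape
\[
x^2+y^2+Dx+Ey+F=0,
\]
where $D=-2\alpha$, $E=-2\beta$ and $F=\alpha^2+\beta^2-\lambda$. Conversely, since $\mathrm{char}\,\F_q\neq 2$, any triple $(D,E,F)$ recovers a unique centre $(\alpha,\beta)=(-D/2,-E/2)$ and value $\lambda=\alpha^2+\beta^2-F$. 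Thus circles are in bijection with monic equations of the displayed form, and it suffices to show there is a unique $(D,E,F)$ for which all three points satisfy the equation.

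Next I would impose the incidence conditions. Substituting each point $(x_i,y_i)$ gives the linear condition $Dx_i+Ey_i+F=-(x_i^2+y_i^2)$, so the three conditions together form a linear system in the unknowns $(D,E,F)$ with coefficient matrix
\[
M=\begin{pmatrix} x_1 & y_1 & 1 \\ x_2 & y_2 & 1 \\ x_3 & y_3 & 1 \end{pmatrix}.
\]
The key observation is the algebraic identity $\det M=(x_2-x_1)(y_3-y_1)-(x_3-x_1)(y_2-y_1)$, which is precisely the quantity vanishing if and only if the vectors $(x_2-x_1,y_2-y_1)$ and $(x_3-x_1,y_3-y_1)$ are linearly dependent, i.e.\ if and only if the three points are collinear. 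By hypothesis the points are non-collinear, so $\det M\neq 0$ and the system has a unique solution $(D,E,F)\in\F_q^3$. Existence is then automatic, since by construction the three points lie on the resulting locus, which is therefore a nonempty circle through all of them; and uniqueness follows because any circle supporting the three points must yield a solution of the same system, which has only one.

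I expect the only point requiring care to be the identification of $\det M$ with the collinearity condition: this is the standard two-dimensional determinant criterion, and since it is a purely algebraic identity it transfers verbatim to $\F_q$. Two secondary points worth flagging in the write-up are that the monic normalisation of the quadratic equation (coefficient $1$ on $x^2+y^2$) is exactly what prevents a degenerate, non-circle solution and guarantees a unique defining equation, and that the use of $\mathrm{char}\,\F_q\neq 2$ is essential in order to divide by $2$ when recovering the centre from $(D,E)$.
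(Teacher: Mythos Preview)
Your proof is correct and follows essentially the same approach as the paper: both reduce the three incidence conditions to a linear system and show that the system is non-degenerate precisely because the points are non-collinear. The only difference is cosmetic: the paper subtracts the first equation from the other two to get a $2\times 2$ system in the centre coordinates $(a,b)$ and argues by contradiction that degeneracy would force collinearity, whereas you change variables to $(D,E,F)$ so that the three conditions form a $3\times 3$ linear system whose coefficient determinant is exactly the standard collinearity determinant --- a slightly cleaner packaging of the same idea.
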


\begin{proof} We'll show that there exists a unique triple of elements $a,b,c\in{\mathbb{F}_q}$ such that the system of equations

\begin{equation}
\left\{ \begin{array}{llllllll}
                                          (x_1-a)^2 &+& (y_1-b)^2 &=&c,\\
                                          (x_2-a)^2 &+& (y_2-b)^2 &=&c,\\
                                          (x_3-a)^2 &+& (y_3-b)^2 &=&c,\\
                                        \end{array}\right.\label{system2}\end{equation}
can be realised. Note that we cannot have $x_1=x_2=x_3$, since this would contradict the hypothesis that the three points are non-collinear. Therefore, it is assumed without loss of generality that $x_1\neq{x_2}$ and $x_1\neq{x_3}$.

Subtracting the first equation in \eqref{system2} from the second and third yields the following system of linear equations:

\begin{equation}
\left\{ \begin{array}{llllllll}
                                          2(x_1-x_2)a &+& 2(y_1-y_2)b &=&x_1^2+y_1^2-x_2^2-y_2^2,\\
                                          2(x_1-x_3)a &+& 2(y_1-y_3)b &=&x_1^2+y_1^2-x_3^2-y_3^2.\\
                                        \end{array}\right.\label{system3}\end{equation}
It cannot be the case that both $y_2=y_1$ and $y_3=y_1$ (otherwise the three points would be collinear), and so at least one of the $b$ coefficients is non-zero. Therefore, this is a system of two linear equations with two variables ($a$ and $b$). This system has a unique solution $(a,b)$, unless it is degenerate. This solution can then be plugged into \eqref{system2} to give a unique solution to the system as required. It remains to show that \eqref{system3} is non-degenerate.

Suppose for a contradiction that \eqref{system3} is degenerate. Then there exists $\lambda\in{\mathbb{F}_q^*}$ such that

\begin{equation}
\left\{ \begin{array}{llllllll}
                                          \lambda{(x_2-x_1)} &=&x_3-x_1,\\
                                          \lambda{(y_2-y_1)} &=&y_3-y_1,\\
                                        \end{array}\right.\label{system4}\end{equation}
Since it is known that at least one of $y_2-y_1$ and $y_3-y_1$ is non-zero, and $\lambda$ is non-zero, it must be the case that both $y_2-y_1\neq{0}$ and $y_3-y_1\neq{0}$. Therefore,
$$\lambda=\frac{x_3-x_1}{x_2-x_1}=\frac{y_3-y_1}{y_2-y_1}.$$
Hence
$$y_3-y_1=(x_3-x_1)\frac{y_2-y_1}{x_2-x_1},$$
and clearly
$$y_2-y_1=(x_2-x_1)\frac{y_2-y_1}{x_2-x_1}.$$
This implies that $(x_1,y_1),(x_2,y_2)$ and $(x_3,y_3)$ are supported on a line with equation
$$y=\frac{y_2-y_1}{x_2-x_1}x+C,$$
where $C=\frac{y_1x_2-x_1y_2}{x_2-x_1}$. This is a contradiction, and the proof is complete.

\end{proof}

We say that a circle $C$ is determined by $P$ if there exist three points from $P$ which determine the circle $C$. We are now ready to state our version of Beck's Theorem for circles:

\begin{theorem} \label{beckcirc} Let $P\subset{\mathbb{F}_q^2}$ such that $|P|\geq{5q}$. Then $P$ determines at least $\frac{4q^3}{9}$ distinct circles.
\end{theorem}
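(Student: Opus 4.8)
The plan is to replace triple-counting by a second-moment (variance) argument on how the points of $P$ distribute among \emph{all} circles. Triple-counting alone is too weak here: a handful of point-rich circles could in principle carry almost all non-collinear triples, so bounding the number of triples per circle only yields a lower bound of constant order. Instead I regard a circle as a pair (centre, radius) $\in \F_q^2\times \F_q$, so that there are exactly $q^3$ circles; for a circle $C$ I write $m_C:=|P\cap C|$ and I want to bound below $N:=|\{C:\ m_C\ge 3\}|$, which up to the negligible radius-zero circles is the number of circles determined by $P$ in the sense of Lemma \ref{collinear}.

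First I would record two exact incidence identities, both proved by direct counting (they are the sharp form of what Theorem \ref{main} predicts for the family of all circles). Every point of $P$ lies on exactly $q^2$ circles, since the centre is free and the radius is then forced. Every \emph{ordered pair} of distinct points of $P$ lies on exactly $q$ circles: the admissible centres are the points $z$ with $\|z-p_1\|=\|z-p_2\|$, and after the quadratic terms cancel this is a single nondegenerate linear equation, hence a line with $q$ points, each giving a unique radius. This is exactly where the hypothesis $\mathrm{char}\,\F_q\neq 2$ is used, so that the perpendicular bisector is an honest line. These give
$$\sum_{C}m_C=q^2|P|,\qquad \sum_{C}m_C(m_C-1)=q|P|(|P|-1).$$

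Writing $t:=|P|/q$ for the mean of $m_C$, expanding $\sum_C(m_C-t)^2$ and substituting the two identities yields the exact variance
$$\sum_{C}\left(m_C-t\right)^2=\sum_C m_C^2-t^2q^3=q|P|(q-1).$$
I would then apply Chebyshev's inequality. Any circle with $m_C\le 2$ satisfies $(m_C-t)^2\ge (t-2)^2$, and $|P|\ge 5q$ forces $t\ge 5$, so
$$\bigl|\{C:\ m_C\le 2\}\bigr|\le \frac{q|P|(q-1)}{(t-2)^2}\le \frac{5}{9}q^3,$$
the last inequality because $q(q-1)/q^2<1$ and the function $t\mapsto t/(t-2)^2$ is decreasing on $[5,\infty)$ with value $\tfrac59$ at $t=5$, so the left-hand side only decreases as $|P|$ grows beyond $5q$. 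Subtracting from the total of $q^3$ circles leaves $N\ge q^3-\tfrac59 q^3=\tfrac49 q^3$, which is the claim.

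The main substantive step is the pair-counting identity, which is the analytic heart of the argument and the only place the field structure genuinely enters. A secondary technical matter is the identification of $\{C:\ m_C\ge 3\}$ with the circles genuinely determined by noncollinear triples: for a nondegenerate circle any three of its points are automatically noncollinear, so Lemma \ref{collinear} applies, and the only exceptions are radius-zero circles, of which there are at most $q^2$. These are negligible against the $\tfrac49 q^3$ being produced (and are empty of triples entirely when $-1$ is a nonsquare), so the stated bound is unaffected; I would dispatch this point in a single remark rather than let it complicate the main computation.
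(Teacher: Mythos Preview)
Your argument is correct and reaches the same conclusion by a genuinely different route. The paper passes to a subset $P'\subset P$ of size exactly $5q$, lets $\mathcal S$ be the family of circles meeting $P'$ in at most two points, and applies the lower bound of Theorem~\ref{main} (the paper's main point--sphere incidence estimate, proved via the paraboloid lift and Lemma~\ref{l2}) to $I(P',\mathcal S)$ together with the trivial upper bound $I(P',\mathcal S)\le 2|\mathcal S|$; rearranging gives $|\mathcal S|<\tfrac59 q^3$. You instead bypass Theorem~\ref{main} entirely: working over the family of \emph{all} $q^3$ circles, you compute the first two moments of $m_C$ exactly via the perpendicular-bisector count, obtain the variance $\sum_C(m_C-t)^2=q|P|(q-1)$ as an identity, and apply Chebyshev. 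Your approach is more self-contained for this specific corollary and never needs the subset reduction (the monotonicity of $t\mapsto t/(t-2)^2$ handles $|P|>5q$ directly); the paper's approach has the virtue of exhibiting Theorem~\ref{beckcirc} as an immediate application of its main incidence result. Underneath, the two are the same second-moment computation seen from different angles: your pair identity $\sum_C m_C(m_C-1)=q|P|(|P|-1)$ is the sharp $d=2$ instance of the bound $r_{A-A}(x)\le q^{d-1}$ that drives Lemma~\ref{l2}. One small quibble: saying the at-most-$q^2$ radius-zero circles leave the constant ``unaffected'' is not quite right on the nose, since subtracting $q^2$ from your $N\ge\tfrac49 q^3+\tfrac59 q^2$ leaves $\tfrac49 q^3-\tfrac49 q^2$ rather than $\tfrac49 q^3$; but the paper's own proof glosses over the identical issue when it asserts that every circle with at least three points of $P'$ is ``therefore spanned by $P$'', so this is no defect relative to the paper.
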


Note that in the statement of Theorems \ref{beck} and \ref{beckcirc}, the conclusion is that we determine a positive proportion of all possible lines and circles respectively. If one asks how many points are needed to generate \textit{all} lines (respectively circles), then the problem becomes rather different, since one can take a point set $P=\F_q^2 \setminus l$ where $l$ is a line (respectively $P=\F_q^2 \setminus C$ where $C$ is a circle), and the line $l$ (respectively the circle $C$) is not determined by the point set $P$. So, we cannot hope to show that a set of $o(q^2)$ points determine all possible lines or circles.

\subsection{Proof of Theorem \ref{beckcirc}}

At the outset, identify a subset $P' \subset P$ such that $|P'|=5q$. The aim is to show that $P'$, and hence also $P$, determines many circles.

Let $\mathcal S$ be the set of all circles which contain less than or equal to $3$ points from $P'$. We will show that 

\begin{equation}
|\mathcal S|< \frac{5q^{3}}{9},
\label{aim}
\end{equation}
and then since there are $q^3$ circles, it must be the case that there are at least $\frac{4}{9}q^3$ circles which contain at least $3$ points from $P'$. These $4q^3/9$ circles are therefore spanned by $P$. 

It remains to prove \eqref{aim}, and to do this we will make use of the lower bound on $I(P',\mathcal S)$ from Theorem \ref{main}. We have

\begin{align*}
5|\mathcal S|-|P'|^{1/2}|\mathcal S|^{1/2}q&= \frac{|P'||\mathcal S|}{q}-|P'|^{1/2}|\mathcal S|^{1/2}q
\\&< I(P',\mathcal S)
\\& \leq 2|\mathcal S|,
\end{align*}
a rearrangement of which gives 
$$|\mathcal S| < \frac{|P'|q^2}{9}=\frac{5q^3}{9},$$
as required.
\begin{flushright}
\qedsymbol
\end{flushright}

Arguments similar to the proof above can be found in \cite{LS}. Note that, using a lower bound on the number of incidences between sets of points and lines in $\F_q^2$ which follows from the proof of the main result in \cite{vinh}, it is straightforward to adapt the proof of Theorem \ref{beckcirc} with lines in the place of circles in order to prove a version of Theorem \ref{beck}.

\subsection*{Acknowledgements}J.~Cilleruelo  was supported by grants MTM 2011-22851 of MICINN and ICMAT Severo
Ochoa project SEV-2011-0087. Oliver Roche-Newton was supported by EPSRC Doctoral Prize Scheme (Grant Ref:  EP/K503125/1) and by the Austrian Science Fund (FWF): Project F5511-N26, which is part of the Special Research Program ``Quasi-Monte Carlo Methods: Theory and Applications.

\end{document}